\title{Balanced Non-Transitive Dice}
\author{Alex Schaefer\footnote{Binghamton University; aschaef3@binghamton.edu}  \hspace{0pt}  \& Jay Schweig\footnote{Oklahoma State University; jay.schweig@okstate.edu}}
\newtheorem{theorem}{Theorem}[section]
\newtheorem{corollary}[theorem]{Corollary}
\newtheorem{lemma}[theorem]{Lemma}
\newtheorem{definition}[theorem]{Definition}
\newtheorem{question}[theorem]{Question}
\newtheorem{example}[theorem]{Example}
\newtheorem{algorithm}[theorem]{Algorithm}
\newcommand{\ssection}[1]{%
  \section[#1]{\centering\normalfont\scshape #1}}
\newcommand{\ssubsection}[1]{%
  \subsection[#1]{\raggedright\normalfont\itshape #1}}
\begin{document} 

\maketitle

\begin{abstract}
We study triples of labeled dice in which the relation ``is a better die than'' is non-transitive.  Focusing on such triples with an additional symmetry we call ``balance,'' we prove that such triples of $n$-sided dice exist for all $n \geq 3$.  We then examine the sums of the labels of such dice, and use these results to construct an $O(n^2)$ algorithm for verifying whether or not a triple of $n$-sided dice is balanced and non-transitive.  Finally, we consider generalizations to larger sets of dice.  
\end{abstract}

\ssection{Introduction}

Suppose we play the following game with the three six-sided dice in Figure \ref{path}:  You choose a die, and then I choose a die (based on your choice).  We roll our dice, and the player whose die shows a higher number wins.  \\

A closer look at the dice in Figure \ref{path} reveals that, in the long run, I will have an advantage in this game:  Whichever die you choose, I will choose the one immediately to its left (and I will choose die C if you choose die A).  In any case, the probability of my die beating yours is $19/36 > 1/2$.  \\

\begin{figure}[htp]
\centering
\includegraphics[height = 1.05in]{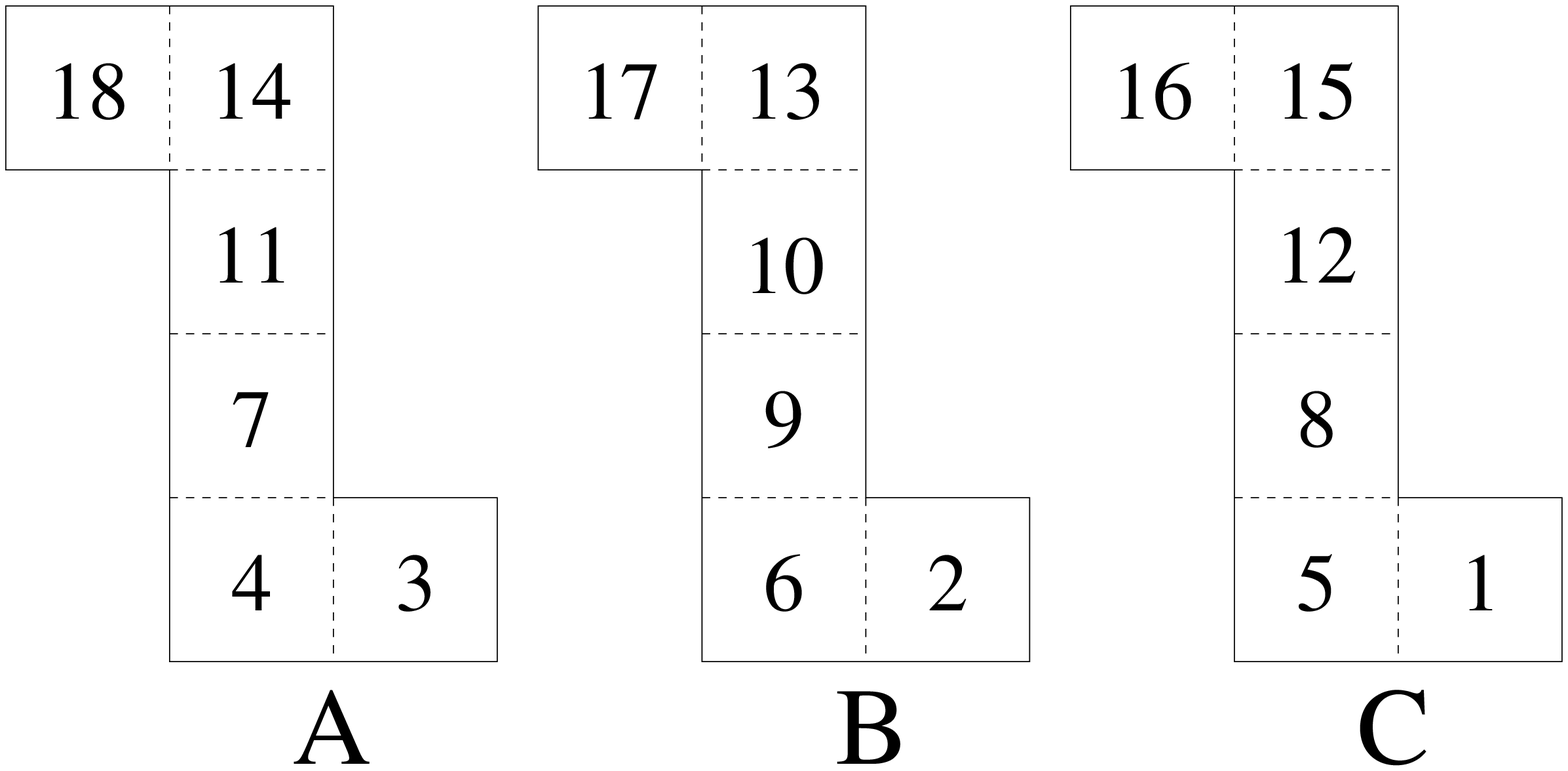}
\caption{A set of balanced non-transitive 6-sided dice.}\label{path}
\end{figure}

This is a case of the phenomenon of \emph{non-transitive} dice, first introduced by Martin Gardner in \cite{MGntD}, and further explored in \cite{MGnt}, \cite{ntD1}, and \cite{FFF}.  \\

We formally define a triple of dice as follows: Fix an integer $n > 0$.  For our purposes, a set of $n$-sided \emph{dice} is a collection of three pairwise-disjoint sets $A,B,$ and $C$ with $|A| = |B| = |C| = n$ and $A \cup B \cup C = [3n]$ (here and throughout, $[n] = \{1, 2, \ldots, n\}$).  We think of die $A$ as being labeled with the elements of $A$, and so on.  Each die is fair, in that the probability of rolling any one of its numbers is $1/n$.  We also write $P(A\succ B)$ to indicate the probability that, upon rolling both $A$ and $B$, the number rolled on $A$ exceeds that on $B$.

\begin{definition}
A set of dice is \emph{non-transitive} if each of $P( A \succ B), P(B \succ C),$ and $P(C \succ A)$ exceeds $1/2$.  That is, the relation ``is a better die than'' is non-transitive.  
\end{definition}

In this paper we (mostly) examine non-transitive sets of dice, but we introduce a new property as well.

\begin{definition}
A set of dice is \emph{balanced} if $P(A \succ B) = P(B \succ C) = P(C \succ A)$.   
\end{definition}

Note that the set of dice in Figure \ref{path} is balanced, as $P(A \succ B) = P(B \succ C) = P(C \succ A) = 19/36$.  \\

In Theorem \ref{thm}, we show that non-transitive balanced sets of $n$-sided dice exist for all $n \geq 3$.  Surprisingly, this is also the first proof that non-transitive sets of $n$-sided dice exist for all $n \geq 3$.  We then prove in Theorem \ref{facesum} that a set of dice is balanced (but not necessarily non-transitive) if and only if the face-sums of the dice are equal (the \emph{face-sum} of a die is simply the sum of the numbers with which it is labeled).  This then yields an $O(n^2)$ algorithm for determining if a given triple of $n$-sided dice is non-transitive and balanced.  Finally, we consider generalizations to sets of four dice.  

\ssection{Balanced dice}

The main goal in this section is to prove the following.

\begin{theorem}\label{thm}
For any $n \geq 3$, there exists a non-transitive set of balanced $n$-sided dice.
\end{theorem}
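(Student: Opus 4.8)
The plan is to prove this constructively, exhibiting for each $n \geq 3$ an explicit set of balanced non-transitive $n$-sided dice. The natural strategy is to first handle small cases by hand (the example of Figure \ref{path} already gives $n=6$, and one can similarly write down explicit labelings for $n=3,4,5$), and then give a general construction. The cleanest way to organize a general construction is to find a way to build a set of $(n+k)$-sided dice from a set of $n$-sided dice, for some small increment $k$ (say $k=1$ or $k=3$), so that induction reduces everything to finitely many base cases. So the first thing I would do is search for an inductive step: given a balanced non-transitive triple $A,B,C$ on $[3n]$, I want to produce a balanced non-transitive triple on $[3(n+k)]$ by inserting $3k$ new labels (the numbers $3n+1, \ldots, 3n+3k$) into the three dice in a symmetric fashion.

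The key structural fact I would lean on is the cyclic symmetry built into the problem. I would look for an insertion scheme that respects the $3$-fold rotational symmetry $A \to B \to C \to A$. Concretely, if I add the three new top values $3n+1, 3n+2, 3n+3$ to the dice, I should distribute them so that each die receives exactly one new label and so that the increments to the three win-counts $|\{(a,b) : a > b\}|$ are equal around the cycle. The heart of the matter is a bookkeeping computation: adding a single large number to die $A$ beats every face of the other two dice, so it contributes a fixed, predictable amount to $P(A \succ B)$ and $P(A \succ C)$ while leaving $P(C \succ A)$ untouched except through the denominator. I would track the win-counts $N(A,B) = |\{(a,b) \in A\times B : a>b\}|$, note that balance is the condition $N(A,B) = N(B,C) = N(C,A)$, and non-transitivity is the condition that each strictly exceeds $n^2/2$. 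The goal of the inductive step is to show the new win-counts remain equal and stay above the (new, larger) threshold $(n+k)^2/2$.

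The main obstacle I anticipate is the second of these: maintaining \emph{strict} non-transitivity through the induction. Equality (balance) should be relatively easy to preserve by symmetry, since I can always insert the new labels in a cyclically symmetric pattern and the three win-counts will increase by the same amount. The delicate point is that the non-transitivity threshold scales like $n^2/2$, so I must verify that the margin by which each win-count beats $n^2/2$ grows at least quadratically as $n$ increases, rather than eroding. I would therefore aim for a construction whose win-count has a clean closed form — something like $N(A,B) = (n^2 + cn)/2$ for an explicit constant $c > 0$ — so that the excess over $n^2/2$ is manifestly positive and growing.

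Accordingly, rather than a purely inductive argument I would try to guess a direct closed-form construction and verify it. The most promising template, suggested by the $n=6$ example (A beats B beats C beats A each with probability $19/36$, i.e. win-count $19$ out of $36$), is to arrange the $3n$ labels into $n$ consecutive blocks of three and assign within each block according to a fixed cyclic pattern, alternating or rotating the assignment from block to block to create the non-transitive bias. I would write down such a rule explicitly, compute $N(A,B)$, $N(B,C)$, $N(C,A)$ in closed form, and check that they are equal and exceed $n^2/2$ for all $n \geq 3$. If a single uniform pattern does not cover all residues of $n$ modulo $3$ (or modulo $2$), I would split into cases by the parity or residue class of $n$, handling each with its own explicit block pattern and a short arithmetic verification, so that the final proof is a finite checklist of formulas rather than an open-ended induction.
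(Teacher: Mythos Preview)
Your overall architecture --- explicit base cases for a few small $n$, then an inductive step that appends new top labels --- is exactly the paper's strategy. But the specific step you spend the most time on, the $k=1$ increment, cannot work: if you append the three labels $3n+1,3n+2,3n+3$, one to each die, then for any assignment the three win-counts $N(A,B),N(B,C),N(C,A)$ increase by $n+1,n+1,n$ in some order (the die receiving the smallest new label loses both head-to-heads among the new faces), so balance is always destroyed. Equivalently, there is no balanced word of length~$3$ to append.

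The paper sidesteps this by taking the increment to be a full balanced non-transitive block. It encodes a set of dice as a word $\sigma$ in $\{a,b,c\}$ and proves two concatenation lemmas: if $\sigma$ and $\tau$ are balanced then $\sigma\tau$ is balanced, and if both are non-transitive then so is $\sigma\tau$. The second lemma dispatches exactly the worry you flag about the non-transitivity margin: writing $V_\sigma$ for the minimum win-count, one gets $V_{\sigma\tau}=V_\sigma+V_\tau+mn>m^2/2+n^2/2+mn=(m+n)^2/2$. With explicit base cases for $n=3,4,5$ (one per residue class mod~$3$), repeated concatenation of the $n=3$ word finishes the proof. So your $k=3$ fallback, once made precise, \emph{is} the paper's argument; your closed-form block construction is a plausible alternative route the paper does not take.
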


First, we need some machinery.  Fix $n > 0$.  For our purposes, a \emph{word} $\sigma$ is a sequence of $3n$ letters where each letter is either an $a, b,$ or $c$, and each of $a, b,$ and $c$ appears $n$ times.  

\begin{definition}
If $D$ is a set of $n$-sided dice, define a word $\sigma(D)$ by the following rule: the $i^\text{th}$ letter of $\sigma(D)$ corresponds to the die on which the number $i$ labels a side.  
\end{definition}

Now let $\sigma = s_1 s_2 \cdots s_{3n}$ be a word.  We define a function $q^+_\sigma$ on the letters of $\sigma$ as follows. 
\begin{displaymath}
   q^+_\sigma(s_i) = \left\{
     \begin{array}{lr}
       |\{j < i : s_j = b \}|  & :  s_i = a\\
 |\{j < i : s_j = c \}|  & :  s_i = b\\     
  |\{j < i : s_j = a \}|  & :  s_i = c 
 \end{array}
   \right.
  \end{displaymath} 
Similarly, define a function $q^-_\sigma$ by 
\begin{displaymath}
   q^-_\sigma(s_i) = \left\{
     \begin{array}{lr}
       |\{j < i : s_j = c \}|  & :  s_i = a\\
 |\{j < i : s_j = a \}|  & :  s_i = b\\     
  |\{j < i : s_j = b \}|  & :  s_i = c 
 \end{array}
   \right.
  \end{displaymath}


For example, if $s_{i} = a$, then $q^+(s_i)$ is the number of sides of die $B$ whose labels precede $i$.  Similarly, $q^-(s_i)$ is the number of sides of die $C$ whose labels precede $i$.

\begin{example}\label{3sided}
{\rm Let $D$ be the following set of dice.
\begin{align*}
A &= 9 \hspace{5pt} 5 \hspace{5pt} 1 \\
B & = 8 \hspace{5pt} 4 \hspace{5pt} 3 \\
C &= 7 \hspace{5pt} 6 \hspace{5pt} 2
\end{align*}
Then $\sigma(D) = acbbaccba$.  Note that this set of dice is balanced and non-transitive, as $P(A \succ B) = P(B \succ C) = P(C \succ A) = \frac{5}{9}$. }
\end{example}

Conversely, given a word $\sigma$, let $D(\sigma)$ denote the unique set of dice corresponding to $\sigma$.  As this is a one-to-one correspondence, we often speak of a set of dice and the associated word interchangeably.  For instance, if $\sigma = s_1 s_2 \cdots s_{3n}$ is a $3n$-letter word, the probability of die $A$ beating die $B$ is given by
\[
P(A \succ B) = \frac{1}{n^2} \sum_{s_i = a} q^+(s_i),
\]
and the other probabilities may be computed analogously.  Thus, the property of a set $D$ of dice being balanced is equivalent to $\sigma(D)$ satisfying
\[
\sum_{s_i = a} q^+(s_i) = \sum_{s_i = b} q^+(s_i) = \sum_{s_i = c} q^+(s_i).
\]
Furthermore, if $D$ is a set of $n$-sided dice, then $D$ is non-transitive if and only if each of
\[
\sum_{s_i = a} q_{\sigma(D)}^+(s_i),  \sum_{s_i = b} q_{\sigma(D)}^+(s_i), \text{ and } \sum_{s_i = c} q_{\sigma(D)}^+(s_i)
\]
exceeds $n^2/2$.  

Although a set of dice $D$ and its associated word $\sigma(D)$ hold the same information, this alternate interpretation will prove invaluable in showing Theorem \ref{thm}.  First, we need some lemmas.  Recall that the \emph{concatenation} of two words $\sigma$ and $\tau$, for which we write $\sigma\tau$, is simply the word $\sigma$ followed by $\tau$. 

\begin{lemma}\label{first}
Let $\sigma$ and $\tau$ be balanced words.  Then the concatenation $\sigma \tau$ is balanced. 
\end{lemma}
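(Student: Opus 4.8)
The plan is to compute each of the three balance sums for $\sigma\tau$ directly by splitting the positions of $\sigma\tau$ into those lying in the $\sigma$-block and those lying in the $\tau$-block. Write $\sigma$ for a balanced word with $m$ copies of each letter (so $|\sigma| = 3m$) and $\tau$ for a balanced word with $k$ copies of each letter. The crucial feature of the function $q^+$ is that its value at a position depends only on the letters strictly preceding that position, so for any position $s_i$ lying in the $\sigma$-block we have $q^+_{\sigma\tau}(s_i) = q^+_\sigma(s_i)$ unchanged. All the work is therefore in understanding the positions lying in the $\tau$-block.

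Next I would analyze a position $s_i$ in the $\tau$-block. The letters preceding $s_i$ in $\sigma\tau$ are all $3m$ letters of $\sigma$ together with the letters preceding the corresponding position inside $\tau$. Hence $q^+_{\sigma\tau}(s_i)$ equals $q^+_\tau(s_i)$ plus a correction counting, within $\sigma$, the ``next'' letter relevant to $s_i$: if $s_i = a$ the correction is the number of $b$'s in $\sigma$, if $s_i = b$ it is the number of $c$'s in $\sigma$, and if $s_i = c$ it is the number of $a$'s in $\sigma$. The key observation — and the step I expect to be the real content of the lemma — is that this correction is the same constant for all three letter classes, namely $m$, precisely because $\sigma$ contains exactly $m$ copies of every letter. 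Summing the correction $m$ over the $k$ positions of $\tau$ carrying a given letter contributes an identical $km$ to each of the three sums.

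Putting the two pieces together, I would record that for each letter $x \in \{a,b,c\}$,
\[
\sum_{s_i = x \text{ in } \sigma\tau} q^+_{\sigma\tau}(s_i) \;=\; \sum_{s_i = x \text{ in } \sigma} q^+_{\sigma}(s_i) \;+\; km \;+\; \sum_{s_i = x \text{ in } \tau} q^+_{\tau}(s_i).
\]
Since $\sigma$ is balanced, the first summand is independent of $x$; since $\tau$ is balanced, the third summand is independent of $x$; and the middle term $km$ is manifestly independent of $x$. Therefore the left-hand side is the same for all three choices of $x$, which is exactly the statement that $\sigma\tau$ is balanced. The only delicate point is the uniformity of the cross-term, so I would take care to verify the three cases of the correction explicitly rather than assert them, after which the conclusion is immediate.
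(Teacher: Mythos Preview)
Your proof is correct and follows essentially the same approach as the paper's: split the positions of $\sigma\tau$ into the $\sigma$-block (where $q^+$ is unchanged) and the $\tau$-block (where $q^+$ picks up a uniform correction of $m$), then sum to obtain the identity $\sum_{s_i=x}q^+_{\sigma\tau}(s_i)=\sum_{s_i=x}q^+_{\sigma}(s_i)+\sum_{s_i=x}q^+_{\tau}(s_i)+km$ and invoke the balanced hypotheses on $\sigma$ and $\tau$. The paper's argument is identical up to your use of $k$ in place of the paper's $n$.
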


\begin{proof}
Let $|\sigma|=3m$, $|\tau|=3n$. If $i\leq3m$, then
$q^{+}_{\sigma\tau}(s_{i})=q^{+}_{\sigma}(s_{i})$ ($q^{+}$ is defined as a subset of the $s_j$ with $j<i$, so concatenating $\tau$ after $\sigma$ contributes nothing to these).
Otherwise (for $3m<i\leq 3m+3n$), $q^{+}_{\sigma\tau}(s_{i})=q^{+}_{\tau}(s_{i})+m$, because every letter from $\tau$ beats all $m$ letters from the appropriate die in $\sigma$, in addition to whichever letters it beats from the structure of $\tau$ itself.
Then 
\begin{align}\label{concat}
\sum_{s_{i}=a}q^{+}_{\sigma\tau}(s_{i})=\sum_{s_{i}=a}q^{+}_{\sigma}(s_{i})+\sum_{s_{i}=a}q^{+}_{\tau}(s_{i})+mn.
\end{align}
We may repeat the argument for $s_{i}=b,c$, and then we are done as $\sigma$ and $\tau$ are balanced.
\end{proof}

While Lemma \ref{first} is primarily useful for balanced words (or sets of dice), the next result applies to arbitrary sets of non-transitive dice.  

\begin{lemma}\label{concatbalanced}
Let $\sigma$ and $\tau$ be non-transitive words.  Then the concatenation $\sigma\tau$ is non-transitive.  
\end{lemma}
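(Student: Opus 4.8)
The plan is to reuse the additivity of the beat-sums under concatenation that was already established inside the proof of Lemma \ref{first}, and then to exploit the fact that the non-transitivity threshold grows as a perfect square. The first thing I would observe is that the derivation of equation (\ref{concat}) never used the hypothesis that $\sigma$ and $\tau$ are balanced: the splitting $q^+_{\sigma\tau}(s_i) = q^+_\sigma(s_i)$ for $i \leq 3m$ and $q^+_{\sigma\tau}(s_i) = q^+_\tau(s_i) + m$ for $i > 3m$ holds for arbitrary words. Hence, writing $|\sigma| = 3m$ and $|\tau| = 3n$, for every letter $x \in \{a, b, c\}$ I have
\[
\sum_{s_i = x} q^+_{\sigma\tau}(s_i) = \sum_{s_i = x} q^+_\sigma(s_i) + \sum_{s_i = x} q^+_\tau(s_i) + mn.
\]

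Next I would feed in the non-transitivity hypotheses. Since $\sigma$ is non-transitive, each of its three $q^+$-sums exceeds $m^2/2$; since $\tau$ is non-transitive, each of its three $q^+$-sums exceeds $n^2/2$. Substituting these bounds into the displayed identity yields, for each $x$,
\[
\sum_{s_i = x} q^+_{\sigma\tau}(s_i) > \frac{m^2}{2} + \frac{n^2}{2} + mn = \frac{(m+n)^2}{2}.
\]
Because $\sigma\tau$ has length $3(m+n)$, this is precisely the assertion that all three $q^+$-sums of $\sigma\tau$ exceed $(m+n)^2/2$, which is exactly the condition for $\sigma\tau$ to be non-transitive.

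The computation itself is short, so the only real subtlety — and the step I would be most careful about — is recognizing that the correct threshold for $\sigma\tau$ is $(m+n)^2/2$ rather than some additive combination of $m^2/2$ and $n^2/2$, and then verifying that the cross term $mn$ arising from concatenation exactly closes the gap via $(m+n)^2 = m^2 + 2mn + n^2$. Were the threshold merely additive, the $mn$ contribution would be slack; it is this precise matching of the quadratic scaling of the threshold against the $mn$ term that makes the lemma go through cleanly.
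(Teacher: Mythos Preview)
Your proof is correct and follows essentially the same approach as the paper: both invoke the additivity identity (\ref{concat}) (which, as you note, holds regardless of balance), bound each $q^+$-sum from below using the non-transitivity hypotheses, and then complete the square to reach the threshold $(m+n)^2/2$. The paper packages the argument via $V_\sigma = m^2\min\{P_\sigma(A\succ B),\dots\}$, but the content is the same as yours.
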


\begin{proof}
Let $\sigma$ be a word of length $3m$.  Because $m^2 P_\sigma( A \succ B)$ counts the number of rolls of dice $A$ and $B$ in which die $A$ beats die $B$, we note that
\[
m^2 P_\sigma( A \succ B) = \sum_{s_i = a} q^+(s_i),
\]
and an analogous statement holds for $m^2 P_\sigma( B \succ C)$ and $m^2 P_\sigma( C \succ A)$.  Define a quantity $V_\sigma$ by 
\[
V_\sigma = m^2 \cdot \min \{ P_\sigma( A \succ B) , P_\sigma( B \succ C), P_\sigma( C \succ A) \}. 
\] 

Now let $\tau$ be a word of length $3n$, and define quantities $V_\tau$ and $V_{\sigma \tau}$ as above.  Note that 
\[
V_{\sigma}>\frac{m^{2}}{2},\;\;V_{\tau}>\frac{n^{2}}{2},
\] 
because $\sigma$ and $\tau$ are non-transitive.  By Equation \ref{concat}, we have 
\begin{align*}
V_{\sigma\tau} & = V_{\sigma}+V_{\tau}+mn\\
& > \frac{m^{2}}{2}+\frac{n^{2}}{2}+mn\\
& = \frac{(m+n)^{2}}{2},\\
\end{align*}
and so $\sigma\tau$ is non-transitive.
\end{proof}

With the two lemmas above in place, we are now able to provide a quick proof of Theorem \ref{thm}, the main result of this section.

\begin{proof}[Proof of Theorem \ref{thm}]
Example \ref{3sided}, along with
\begin{example}\label{4sided}
$$\begin{array}{cccccc}A & = & 12 & 10 & 3 & 1 \\B & = & 9 & 8 & 7 & 2 \\C & = & 11 & 6 & 5 & 4\end{array}$$
\end{example}

\noindent
and

\begin{example}\label{5sided}
$$\begin{array}{ccccccc}A & = & 15 & 11 & 7 & 4 & 3 \\B & = & 14 & 10 & 9 & 5 & 2 \\C & = & 13 & 12 & 8 & 6 & 1\end{array}$$
\end{example}

\noindent
provide balanced, non-transitive sets of dice for $n=3,4,5$, which give rise to balanced words for these $n$, the smallest representatives (in the context of the theorem) for each congruence class modulo $3$. Lemmas \ref{first} and \ref{concatbalanced} then imply that the concatenation of two balanced non-transitive words is a balanced non-transitive word, and the correspondence between words and sets of dice completes the proof.
\end{proof}

\ssection{Face-sums}

After taking a closer look at Example \ref{3sided} as well as the sets of balanced, non-transitive dice given in the proof of Theorem \ref{thm}, one may notice the following phenomenon: In any one of these sets of dice, the sum of the labels of any two dice are equal.  Formally, if $D$ is a set of $n$-sided dice and $\sigma(D) = s_1s_2 \cdots s_{3n}$, we define the \emph{face-sums} of $D$ to be
\[
\sum_{s_i = a} i, \sum_{s_i = b} i , \text{ and } \sum_{s_i = c} i.
\]

\begin{theorem}\label{facesum}
A set of dice $D$ (or the corresponding word) is balanced if and only if the face-sums of its dice are all equal. 
\end{theorem}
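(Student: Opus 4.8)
The plan is to express both conditions in terms of the quantities already governing balance. Write $S_a = \sum_{s_i = a} i$, $S_b = \sum_{s_i = b} i$, $S_c = \sum_{s_i = c} i$ for the three face-sums, and abbreviate $Q^+_a = \sum_{s_i = a} q^+(s_i)$, with $Q^+_b, Q^+_c$ and $Q^-_a, Q^-_b, Q^-_c$ defined analogously. In this language $D$ is balanced precisely when $Q^+_a = Q^+_b = Q^+_c$, and the theorem asks us to show that this is equivalent to $S_a = S_b = S_c$.

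First I would establish a decomposition of each face-sum. For a position $i$ with $s_i = a$, the value $i$ equals $1$ plus the number of letters strictly preceding it, and those preceding letters split into $a$'s, $b$'s, and $c$'s; by the definitions of $q^+$ and $q^-$ (as noted immediately after their introduction), the preceding $b$'s contribute $q^+(s_i)$ and the preceding $c$'s contribute $q^-(s_i)$. Summing over the $n$ positions labelled $a$, and observing that the preceding-$a$ counts sum to $\binom{n}{2}$, yields
\[
S_a = n + \binom{n}{2} + Q^+_a + Q^-_a,
\]
and cyclically $S_b = n + \binom{n}{2} + Q^+_b + Q^-_b$ and $S_c = n + \binom{n}{2} + Q^+_c + Q^-_c$. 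Thus $S_a = S_b = S_c$ if and only if the three sums $Q^+_a + Q^-_a$, $Q^+_b + Q^-_b$, $Q^+_c + Q^-_c$ agree.

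The crucial remaining ingredient ties the $Q^-$ sums back to the $Q^+$ sums. Comparing dice $A$ and $B$, each of the $n^2$ pairs of labels has a unique winner, so the pairs on which $A$ beats $B$ (counted by $Q^+_a$) and the pairs on which $B$ beats $A$ together exhaust all $n^2$ pairs. Since for $s_i = b$ the quantity $q^-(s_i)$ counts preceding $a$'s, the latter count is exactly $Q^-_b$, giving $Q^+_a + Q^-_b = n^2$; cyclically $Q^+_b + Q^-_c = n^2$ and $Q^+_c + Q^-_a = n^2$. Substituting $Q^-_a = n^2 - Q^+_c$, $Q^-_b = n^2 - Q^+_a$, and $Q^-_c = n^2 - Q^+_b$ into the decomposition collapses the face-sum condition to the three statements $2Q^+_a = Q^+_b + Q^+_c$, $2Q^+_b = Q^+_c + Q^+_a$, and $2Q^+_c = Q^+_a + Q^+_b$.

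Finally I would solve this linear system. These equations say that $(Q^+_a, Q^+_b, Q^+_c)$ lies in the kernel of the matrix $3I - J$ (with $J$ the all-ones matrix), which is spanned by $(1,1,1)$; hence they hold if and only if $Q^+_a = Q^+_b = Q^+_c$ — exactly the balanced condition — and this settles both directions at once. I expect the main obstacle to be the bookkeeping in the third step: the cyclic, twisted pairing that matches $Q^+_a$ with $Q^-_b$ rather than with a sum of the same type. Once one verifies that $q^+$ for $a$-labels counts $B$-sides while $q^-$ for $b$-labels counts $A$-sides, so that these two sums are genuine complements within the $A$-versus-$B$ comparison, the rest is routine substitution and a rank-$2$ linear algebra computation.
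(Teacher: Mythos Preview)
Your proof is correct and follows essentially the same route as the paper: both arguments rest on the decomposition $S_a = n + \binom{n}{2} + Q^+_a + Q^-_a$ together with the complementarity relations $Q^+_a + Q^-_b = n^2$ (and cyclic analogues), finishing with the observation that the resulting linear system forces $Q^+_a = Q^+_b = Q^+_c$. The only cosmetic difference is that you substitute and handle both directions simultaneously via the kernel of $3I - J$, whereas the paper treats the two implications separately and leaves the linear algebra as ``straightforward.''
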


\begin{proof}
(\emph{only if}) Let $D$ be a set of balanced dice, and $\sigma(D)$ the word associated with it. Recall our definition for balanced words:
\[
\sum_{s_{i}=a}q^{+}_{\sigma}(s_{i})=\sum_{s_{i}=b}q^{+}_{\sigma}(s_{i})=\sum_{s_{i}=c}q^{+}_{\sigma}(s_{i}),
\]
which is obviously equivalent to
\[
\sum_{s_{i}=a}q^{-}_{\sigma}(s_{i})=\sum_{s_{i}=b}q^{-}_{\sigma}(s_{i})=\sum_{s_{i}=c}q^{-}_{\sigma}(s_{i}).
\]
Further define 
\[
q_{\sigma}(s_{i})=|\{j<i: s_{j}=s_{i}\}|.
\]
We focus on die $A$ (with face-sum $\sum_{s_{i}=a}i$), and make two observations: 
First, for a face of $A$, its label $i$ may be written as 
\[
i=q^{+}_{\sigma}(s_{i})+q^{-}_{\sigma}(s_{i})+q_{\sigma}(s_{i})+1.
\]
Second, since $A$ has $n$ sides, 
\[
\sum_{s_{i}=a}q_{\sigma}(s_{i})=\frac{n(n-1)}{2}.
\]
Then, $$\sum_{s_{i}=a}i=\sum_{s_{i}=a}\left(q^{+}_{\sigma}(s_{i})+q^{-}_{\sigma}(s_{i})+q_{\sigma}(s_{i})+1\right)=\sum_{s_{i}=a}q^{+}_{\sigma}(s_{i})+\sum_{s_{i}=a}q^{-}_{\sigma}(s_{i})+\frac{n(n-1)}{2}+n.$$
However, this computation was independent of our choice of $A$, so the other two are analogous, and every term in sight is equal as $\sigma(D)$ is balanced. \\

(\emph{if}) Let $D$ be a set of $n$-sided dice (with word $\sigma(D)$), and assume that
$$\sum_{s_{i}=a}i=\sum_{s_{i}=b}i=\sum_{s_{i}=c}i.$$
By the above, this is equivalent to $$\sum_{s_{i}=a}q^{+}_{\sigma}(s_{i})+\sum_{s_{i}=a}q^{-}_{\sigma}(s_{i})=\sum_{s_{i}=b}q^{+}_{\sigma}(s_{i})+\sum_{s_{i}=b}q^{-}_{\sigma}(s_{i})=\sum_{s_{i}=c}q^{+}_{\sigma}(s_{i})+\sum_{s_{i}=c}q^{-}_{\sigma}(s_{i}).$$
Write $$a^{+}=\sum_{s_{i}=a}q^{+}_{\sigma}(s_{i}),\;\;a^{-}=\sum_{s_{i}=a}q^{-}_{\sigma}(s_{i})$$ and analogously define $$b^{+},b^{-},c^{+}, \text{ and }c^{-}.$$
Then, we have $$a^{+}+a^{-}=b^{+}+b^{-}=c^{+}+c^{-},$$
and $$a^{+}+b^{-}=b^{+}+c^{-}=c^{+}+a^{-}(=n^{2}),$$ giving six equations in six unknowns.
Some straightforward linear algebra gives $$a^{+}=b^{+}=c^{+},$$ whence we also have $$a^{-}=b^{-}=c^{-}.$$
\end{proof}

Applying the result of Theorem \ref{facesum}, we obtain the following algorithm for checking if a given partition of $[3n]$ into $3$ size-$n$ subsets determines a set of balanced non-transitive dice.

\begin{algorithm}
{\rm Suppose we are given a partition of $[3n]$ into $3$ size $n$ subsets $A,B,$ and $C$.  First, check the sums of the elements of these subsets.  These sums are equal if and only if the set of dice is balanced (by Theorem \ref{facesum}).  If this condition is met, check $P(A \succ B)$.  If $P(A \succ B) = 1/2$, the set of dice is balanced but fair.  If $P(A \succ B) > 1/2$, the set is balanced and non-transitive.  If $P(A \succ B) < 1/2$, switching the labels of sets $B$ and $C$ produces a balanced non-transitive set of dice.  Since this algorithm must check each pair of sides from dice $A$ and $B$, it clearly runs in $O(n^2)$ time.  }
\end{algorithm}

By contrast, not checking the face-sums, using only the probabilities to check balance would take roughly 3 times as long.

\ssection{Other constructions}

\ssubsection{Non-transitive dice and Fibonacci numbers}

In \cite{ntD1}, Savage forms sets of non-transitive dice from consecutive terms of the Fibonacci Sequence.  We briefly explain his construction.  (We let $f_i$ denote the $i^\text{th}$ Fibonacci number, so that $f_1 = f_2 =1, f_3 = 2,$ et cetera.) 

\begin{algorithm}[\cite{ntD1}]\label{savage}
{\rm Given a Fibonacci number \emph{$f_{k}$}, consider the sequence 
\[
f_{k-2},f_{k-1},f_{k},f_{k-1},f_{k-2}.
\] 
Beginning with the number $3f_{k}$, label die $A$ with $f_{k-2}$ consecutive descending integers. Then label die $B$ with the next $f_{k-1}$ values, die $C$ with the next $f_{k}$ values, $A$ with the next $f_{k-1}$ values, and $B$ with the last (ending in $1$) $f_{k-2}$ values. This produces a set of non-transitive dice (which is never balanced).}
\end{algorithm}

In the case where $f_{k}$ is an \emph{odd} Fibonacci number, a simple addition to this construction actually yields a balanced set.

\begin{algorithm}
{\rm Perform Algorithm \ref{savage} to obtain a set of non-transitive dice. Then, swap the last element of the first set of values ($3f_{k}-f_{k-2}+1$, to be precise), which is on die $A$, with the first element of the second set of values ($3f_{k}-f_{k-2}$), which is the largest number on die $B$. The resulting set of dice is non-transitive and balanced.}
\end{algorithm}

\ssubsection{Sets of four dice}

Consider a modification of \emph{set of dice} to mean four dice, labeled $A,B,C,D$. Then

\begin{example}\label{3sided4}
$$\begin{array}{cccc}A: & 12 & 5 & 2 \\B: & 11 & 8 & 1 \\C: & 10 & 7 & 3 \\D: & 9 & 6 & 4 \end{array}$$
\end{example}

\begin{example}\label{4sided4}
$$\begin{array}{ccccc}A: & 16 & 10 & 7 & 1 \\B: & 15 & 9 & 6 & 4 \\C: & 14 & 12 & 5 & 3 \\D: & 13 & 11 & 8 & 2 \end{array}$$
\end{example}

\begin{example}\label{5sided4}
$$\begin{array}{cccccc}A: & 20 & 13 & 10 & 6 & 4 \\B: & 19 & 15 & 9 & 8 & 3 \\C: & 18 & 16 & 12 & 5 & 1 \\D: & 17 & 14 & 11 & 7 & 2 \end{array}$$
\end{example}

\noindent give minimal examples for balanced non-transitive sets of dice. Modifying to $4n$-letter words (with $n$ each of $a,b,c,d$), the proof of Theorem \ref{thm} generalizes, which gives us the following.

\begin{theorem}\label{mainfour}
For any $n \geq 3$, there exists a set of four balanced, non-transitive, $n$-sided dice.  
\end{theorem}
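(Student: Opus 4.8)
The plan is to transplant the entire apparatus of Section 2 from three letters to four, since every ingredient of the proof of Theorem \ref{thm} is insensitive to the number of dice. First I would set up the word machinery: a word is now a sequence of $4n$ letters drawn from $\{a,b,c,d\}$, each appearing exactly $n$ times, and the word $\sigma(D)$ of a set $D$ of four dice is defined as before, its $i$-th letter recording which die carries the label $i$. Orienting the four dice cyclically so that $A$ beats $B$ beats $C$ beats $D$ beats $A$, I define $q^+_\sigma(s_i)$ to be the number of positions $j < i$ for which $s_j$ is the letter immediately beaten by $s_i$ (so for $s_i = a$ we count preceding $b$'s, for $s_i = b$ preceding $c$'s, and so on cyclically). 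Exactly as in the three-die case one checks that $P(A \succ B) = \frac{1}{n^2}\sum_{s_i=a}q^+_\sigma(s_i)$ and cyclically, so that a word is \emph{balanced} precisely when the four sums $\sum_{s_i=a}q^+_\sigma(s_i), \ldots, \sum_{s_i=d}q^+_\sigma(s_i)$ all coincide, and \emph{non-transitive} precisely when each exceeds $n^2/2$.

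Next I would reprove the two concatenation lemmas. Given words $\sigma$ of length $4m$ and $\tau$ of length $4n$, every face contributed by $\tau$ carries a label exceeding all $4m$ labels of $\sigma$, so each of the $n$ faces of $\tau$ on a given die beats all $m$ faces of $\sigma$ on the die it beats; hence for each die the cross-term is exactly $mn$, giving
$$\sum_{s_i=a}q^+_{\sigma\tau}(s_i) = \sum_{s_i=a}q^+_{\sigma}(s_i) + \sum_{s_i=a}q^+_{\tau}(s_i) + mn$$
together with its three cyclic analogues. Because the same quantity $mn$ is added to all four sums, balance is preserved (the analogue of Lemma \ref{first}). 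For non-transitivity, setting $V_\sigma$ to be the minimum of the four sums for $\sigma$, the uniform $+mn$ together with superadditivity of the minimum yields $V_{\sigma\tau} \geq V_\sigma + V_\tau + mn > \frac{m^2}{2} + \frac{n^2}{2} + mn = \frac{(m+n)^2}{2}$, so $\sigma\tau$ is non-transitive (the analogue of Lemma \ref{concatbalanced}).

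Finally I would run the same induction. Examples \ref{3sided4}, \ref{4sided4}, and \ref{5sided4} supply balanced non-transitive four-die words for $n = 3, 4, 5$, one for each residue modulo $3$. For $n \geq 6$ I write $n = 3 + (n-3)$ with $n - 3 \geq 3$ and concatenate the word for $3$ with an inductively obtained balanced non-transitive word for $n - 3$; by the two lemmas the result is again balanced and non-transitive, and the word-to-dice correspondence finishes the argument.

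The step requiring the most care is the cross-term count in the concatenation formula: I must confirm that, with the cyclic orientation fixed, a face of $\tau$ on die $X$ beats precisely the $m$ faces of $\sigma$ on the die that $X$ beats, and that the two non-adjacent pairs $(A,C)$ and $(B,D)$ never enter the four tracked quantities. Once this is checked, the relevant driver is the min-superadditivity inequality $V_{\sigma\tau} \geq V_\sigma + V_\tau + mn$ rather than an exact equality (with four competing sums the minimizing die need not agree between $\sigma$ and $\tau$); the inequality is all the non-transitivity argument needs, and the symmetry of the $+mn$ cross-term makes the balance argument carry over unchanged.
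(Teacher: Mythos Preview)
Your proposal is correct and follows exactly the approach the paper indicates: the paper's entire proof of Theorem~\ref{mainfour} is the one-line remark that ``modifying to $4n$-letter words (with $n$ each of $a,b,c,d$), the proof of Theorem~\ref{thm} generalizes,'' together with Examples~\ref{3sided4}--\ref{5sided4} as base cases, and you have simply spelled this out in full. Your observation that the concatenation argument for non-transitivity should use the inequality $V_{\sigma\tau} \geq V_\sigma + V_\tau + mn$ rather than equality is in fact more careful than the paper's own Lemma~\ref{concatbalanced}, where the claimed equality need not hold when the minimizing die differs between $\sigma$ and $\tau$.
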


However, Example \ref{3sided4} has unequal face-sums, proving that Theorem \ref{facesum} does not generalize.

\ssection{Further Questions}

Given the proof of Theorem \ref{thm}, it seems natural to define the following.  

\begin{definition}
{\rm Let $\sigma$ be a balanced non-transitive word.  If there do not exist balanced non-transitive words $\tau_1$ and $\tau_2$ (both nonempty) such that $\sigma = \tau_1 \tau_2$, we say that $\sigma$ (and its associated set of dice) is \emph{irreducible}.  }
\end{definition}

\begin{question}
For any $n \geq 3$, does there necessarily exist an irreducible balanced non-transitive set of $n$-sided dice?
\end{question}

The notion of non-transitive triples of dice also suggests the following broad generalization.  

\begin{definition}
{\rm Let $G$ be an orientation of $K_m$, the complete graph on the vertex set $\{v_1, v_2, \ldots, v_m\}$.  Define a \emph{realization} of $G$ to be an $m$-tuple of $n$-sided dice $A_1, A_2, \ldots, A_m$ for some $n$ (where now the $A_i$'s partition $[mn]$) satisfying the following property:
\[
P(A_i \succ A_j) > \frac{1}{2} \Leftrightarrow (v_i \rightarrow v_j) \text{ is an edge of } G.
\] }
\end{definition}

Theorem \ref{thm} gives us the following as a corollary.   

\begin{corollary}
Let $G$ be an orientation of $K_3$.  Then there exists a realization of $G$ using $n$-sided dice for any $n \geq 3$.  
\end{corollary}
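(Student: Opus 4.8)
The plan is to reduce to the two isomorphism types of tournaments on three vertices. First I would observe that the notion of a realization is invariant under relabeling: if $A_1, A_2, A_3$ realize an orientation $G$ and $\pi$ is a permutation of $\{1,2,3\}$, then $A_{\pi(1)}, A_{\pi(2)}, A_{\pi(3)}$ realize the orientation obtained by applying $\pi$ to the vertex labels of $G$. Hence it suffices to exhibit a realization for a single representative of each isomorphism class of orientations of $K_3$. Since every tournament on three vertices has score sequence either $(0,1,2)$ or $(1,1,1)$, there are exactly two such classes: the transitive tournament and the directed $3$-cycle.

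For the cyclic case, I would invoke Theorem \ref{thm} directly. It produces, for every $n \geq 3$, a set of $n$-sided dice $A, B, C$ (partitioning $[3n]$) with $P(A \succ B)$, $P(B \succ C)$, and $P(C \succ A)$ all exceeding $1/2$. Because the three dice carry pairwise-distinct labels, ties never occur, so $P(A_i \succ A_j) + P(A_j \succ A_i) = 1$ for each pair; consequently each reverse probability lies strictly below $1/2$. Thus the only edges are $v_A \rightarrow v_B$, $v_B \rightarrow v_C$, and $v_C \rightarrow v_A$, which realizes the directed $3$-cycle.

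For the transitive case, I would write down an explicit block construction: label $C$ with $\{1, \ldots, n\}$, label $B$ with $\{n+1, \ldots, 2n\}$, and label $A$ with $\{2n+1, \ldots, 3n\}$. Then $P(A \succ B) = P(B \succ C) = P(A \succ C) = 1$ while each reverse probability is $0$, so the edge set is exactly $\{v_A \rightarrow v_B,\ v_A \rightarrow v_C,\ v_B \rightarrow v_C\}$, the transitive tournament. This works for every $n \geq 1$, and in particular for all $n \geq 3$.

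The only point requiring care — and the one I would flag as the crux — is the no-ties observation: since an orientation of the complete graph leaves no pair of vertices unrelated, a realization demands an edge for \emph{every} pair, so I must guarantee $P(A_i \succ A_j) \neq 1/2$ for each pair, with the inequality pointing the correct way. Distinctness of all $3n$ labels forces $P(A_i \succ A_j) + P(A_j \succ A_i) = 1$, which converts each strict inequality ``$> 1/2$'' arising in my two constructions into the correct strict inequality ``$< 1/2$'' in the opposite direction. This ensures the biconditional in the definition of a realization holds exactly, completing the proof.
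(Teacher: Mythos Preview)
Your proof is correct and follows essentially the same approach as the paper: split into the two tournament types, invoke Theorem~\ref{thm} for the directed $3$-cycle, and use the consecutive-block dice for the transitive case. Your version is more careful (explicitly noting relabeling invariance and the no-ties point to verify the full biconditional), but the underlying argument is identical.
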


\begin{proof}
If $G$ is a directed cycle, Theorem \ref{thm} gives the result.  Otherwise, $G$ is acyclic, meaning the orientation corresponds to a total ordering of the vertices.  Then the dice $A = \{1, 2, \ldots, n\}, B= \{ n+1, n+2, \ldots, 2n\},$ and $C = \{2n+1, 2n+2, \ldots, 3n\}$, appropriately placed, will provide a realization. 
\end{proof}

\begin{question}
Given an orientation of $K_m$, can one always find a set of $n$-sided dice (for some $n$) which realizes this orientation?
\end{question}

\end{document}